\theoremstyle{plain}
\newtheorem{theorem}{Theorem}
\numberwithin{theorem}{section}
\newtheorem*{corollary*}{Corollary}
\newtheorem*{Example*}{Example}
\newtheorem{conjecture}[theorem]{Conjecture}
\theoremstyle{definition}
\newtheorem*{def*}{Definition}
\newtheorem*{theorem*}{Theorem}
\newtheorem*{examples}{Examples}
\newtheorem*{definition*}{Definition}
\theoremstyle{remark}
\newtheorem*{remark}{Remark}
\newcommand{\bracket}[1]{\left( #1 \right)}
\newcommand{\modulo}[3]{#1\equiv#2\ \bracket{\mathrm{mod}\ #3}}
\numberwithin{equation}{section}
\title{\textbf{Divisibility and distribution of 5-regular partitions}}
\author{QI-YANG ZHENG}
\date{} 
\address{Department of Mathematics, Sun Yat-sen University(Zhuhai Campus), Zhuhai}
\email{zhengqy29@mail2.sysu.edu.cn}
\begin{document}
\maketitle

\begin{abstract}
In this paper we study $b_5(n)$, the $5$-regular partitions of $n$. Using the theory of modular forms, we prove several theorems on the divisibility and distribution properties of $b_5(n)$ modulo prime $m\geq5$. In particular, we prove that there are infinitely many Ramanujan-type congruences modulo prime $m\geq5$.
\end{abstract}


\section{Introduction and statement of results}

\subsection{Introduction}

In number theory, we usually denote $p(n)$ as the number of the partitions of $n$. Ramanujan found the three remarkable congruences as follows:
\begin{equation}
    \notag
    \begin{aligned}
    p(5n+4)&\equiv0\ (\mathrm{mod}\ 5),\\
    p(7n+5)&\equiv0\ (\mathrm{mod}\ 7),\\
    p(11n+6)&\equiv0\ (\mathrm{mod}\ 11).
    \end{aligned}
\end{equation}

\noindent
Such congruences are called Ramanujan-type congruences.

~

For $k\in\mathbb{Z}_{>1}$, we define the $k$-regular partitions $b_k(n)$ by
$$\sum_{n=0}^\infty b_k(n)q^n=\prod_{n=1}^\infty\frac{1-q^{kn}}{1-q^n}.$$

\noindent
We study the arithmetical properties of $b_5(n)$ in this paper. Hirschhorn and Sellers\cite{hirschhorn2010elementary} prove that there are infinitely many Ramanujan-type congruences of $b_5(n)$ modulo $2$. Gordon and Ono\cite{gordon1997divisibility} prove that
$$b_5(5n+4)\equiv0\ (\mathrm{mod}\ 5).$$

Up to now, Ramanujan-type congruence of $b_5(n)$ modulo prime $m\geq7$ has not been found. Our main result is that for each prime $m\geq5$, there exist infinitely many Ramanujan-type congruences modulo $m$. For example, we obtain
$$b_5(2023n+99)\equiv0\ (\mathrm{mod}\ 7)$$

\noindent
satisfied for each nonnegative integer $n$. We will give more examples in \textbf{Section} \ref{examples}.

\subsection{Statement of results}

\begin{theorem}
\label{main theorem}
Let $m\geq5$ be a prime. Then a positive density of primes $l$ have the property that
\begin{equation}
    b_5\left( \frac{mln-1}{6} \right)\equiv0\ (\mathrm{mod}\ m)
\end{equation}

\noindent
satisfied for each integer $n$ with $(n,l)=1$.
\end{theorem}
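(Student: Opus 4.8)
The plan is to realise the generating function as the reduction modulo $m$ of a holomorphic cusp form, and then to apply the density theorem for the action of Hecke operators on modular forms modulo $m$. Write $q=e^{2\pi i\tau}$ and $\eta(\tau)=q^{1/24}\prod_{n\ge 1}(1-q^n)$. The defining product identity gives
\[
F(\tau):=\frac{\eta(30\tau)}{\eta(6\tau)}=q\prod_{n\ge1}\frac{1-q^{30n}}{1-q^{6n}}=\sum_{n\ge0}b_5(n)\,q^{6n+1},
\]
so $b_5\!\left(\tfrac{mln-1}{6}\right)$ is exactly the coefficient of $q^{mln}$ in $F$ whenever $mln\equiv1\pmod 6$ (if $mln\not\equiv1\pmod6$ the index is not a nonnegative integer and the asserted congruence is vacuous). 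The first step is to trade the weight-$0$ function $F$ for a holomorphic one: since $(1-X)^{m}\equiv 1-X^{m}\pmod m$, we have $\eta(d\tau)^{m}\equiv\eta(dm\tau)\pmod m$ for every $d$, and hence
\[
g(\tau):=\frac{\eta(30\tau)\,\eta(6\tau)^{m-1}}{\eta(6m\tau)}=F(\tau)\cdot\frac{\eta(6\tau)^{m}}{\eta(6m\tau)}\equiv F(\tau)\pmod m .
\]
By Ligozat's criteria $g$ is a weakly holomorphic modular form of the integral weight $\tfrac{m-1}{2}\ (\ge 2)$ on $\Gamma_0(N)$ for a suitable $N$ with $6m\mid N$, with a Kronecker nebentypus $\chi$ and integer coefficients; computing its orders at the cusps shows that $g$ is holomorphic on $\mathbb{H}$, vanishes to order $1$ at $\infty$, and is holomorphic at every cusp except possibly those lying above $m$, where its pole order is small. (For $m=5$ the factor $\eta(30\tau)$ cancels and $g=\eta(6\tau)^{4}$ is already a cusp form.)

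Next I would apply the Hecke operator $U_m$, which is legitimate since $m\mid N$; this yields $g|U_m\equiv F|U_m\pmod m$. Because the poles of $g$ are confined to cusps lying over $m$ and are small, applying $U_m$ produces something congruent modulo $m$ to an honest cusp form; this is the analytic content of the technique of Ono and Treneer for clearing poles of weakly holomorphic forms at cusps over the modulus, and it is what introduces the factor $m$ into the final congruence. Concretely, one obtains a cusp form $\widehat g\in S_{(m-1)/2}(\Gamma_0(N),\chi)$ with integer coefficients such that $\widehat g\equiv F|U_m\pmod m$. Since the coefficient of $q^{t}$ in $F|U_m$ is the coefficient of $q^{mt}$ in $F$, that is, $b_5\!\left(\tfrac{mt-1}{6}\right)$ when $mt\equiv1\pmod6$ and $0$ otherwise, the coefficient of $q^{t}$ in $\widehat g$ is $\equiv b_5\!\left(\tfrac{mt-1}{6}\right)\pmod m$ for every $t$ with $mt\equiv1\pmod6$.

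Finally I would invoke the standard density theorem from the theory of modular forms modulo a prime (Serre and Swinnerton-Dyer; see also the treatment in Ono's work on the partition function): since $\widehat g$ is a cusp form of integral weight $\ge 2$ on $\Gamma_0(N)$ with integer coefficients and $m\ge 5$ is prime, a positive density of primes $l\equiv -1\pmod{Nm}$ satisfy $\widehat g|T_l\equiv 0\pmod m$. For such an $l$ and any integer $n$ with $(n,l)=1$, the coefficient of $q^{ln}$ in $\widehat g$ equals the coefficient of $q^{n}$ in $\widehat g|T_l$, since the other term $\chi(l)\,l^{(m-3)/2}\cdot(\text{coefficient of }q^{n/l})$ vanishes when $l\nmid n$; hence it is $\equiv 0\pmod m$. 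Taking $t=ln$ in the identification above gives $b_5\!\left(\tfrac{mln-1}{6}\right)\equiv 0\pmod m$ for every $n$ with $(n,l)=1$, which is the theorem.

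The technical core, and the step I expect to be the main obstacle, is the construction of $\widehat g$: one must check via Ligozat's order formula that the poles of $g$ occur only at cusps over $m$ and have order small enough that one application of $U_m$ lands, modulo $m$, in the space of holomorphic cusp forms, and then determine the level, weight and nebentypus of the resulting form. A second delicate point is the density theorem itself, whose proof runs through the Deligne--Serre Galois representations attached to mod $m$ eigenforms together with the Chebotarev density theorem; one needs the relevant mod $m$ images to contain an element of trace $0$ with the appropriate determinant, and it is precisely here that the hypothesis $m\ge 5$ is used, which is also why one obtains a positive density of primes $l$ rather than all of them.
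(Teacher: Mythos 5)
Your endgame coincides with the paper's: once one has an honest cusp form with integer coefficients whose $q^{t}$ coefficient is congruent to $b_5\bigl(\tfrac{mt-1}{6}\bigr)$ modulo $m$, Serre's density theorem for primes $l\equiv-1\pmod{Nm}$ together with the formula for $T_l$ gives the result exactly as you say, and that part of your argument is fine. The genuine gap is the step you yourself flag as ``the main obstacle'': producing the cusp form $\widehat g$ with $\widehat g\equiv F\,|\,U_m\pmod m$. You assert that, because the poles of $g=\eta(30\tau)\eta(6\tau)^{m-1}/\eta(6m\tau)$ lie only at cusps $a/c$ with $m\mid c$ (this much is true: by the order formula they occur at denominators $m,2m,3m,6m$ and have small negative order), a \emph{single} application of $U_m$ lands, modulo $m$, in $S_{(m-1)/2}(\Gamma_0(N),\chi)$. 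Nothing you cite gives this: the expansion of $g\,|\,U_m$ at a cusp $a/c$ is assembled from the expansions of $g$ at the cusps $(a+vc)/(mc)$, whose reduced denominators are again divisible by $m$, so the poles above $m$ do not disappear for free; and the Ono--Treneer pole-clearing technique you invoke works by multiplying by an auxiliary eta-quotient or Eisenstein factor congruent to $1$ modulo $m$ and, in general, applying $U_{m^j}$ for sufficiently large $j$, with an explicit quantitative check of cusp orders, level, weight and nebentypus. Since constructing such a cusp form is the entire content of the theorem before Serre's theorem enters, deferring it leaves the proof incomplete; it is not clear that your specific claim (one application of $U_m$, weight $\tfrac{m-1}{2}$) is even correct as stated without that verification.

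For comparison, the paper sidesteps weakly holomorphic forms and pole-clearing entirely. It multiplies $\eta(5z)/\eta(z)$ by $\eta^{a}(5mz)\eta^{b}(mz)$ with $a+b=4$ chosen according to $m\bmod 6$; since $\eta^{a}(5mz)\eta^{b}(mz)\equiv\eta^{am}(5z)\eta^{bm}(z)\pmod m$, the product is congruent to the honest cusp form $\eta^{am+1}(5z)\eta^{bm-1}(z)\in S_{2m}(\Gamma_0(5),\chi_5)$. Because the auxiliary factor is a series in $q^{m}$, it passes through $U(m)\equiv T(m)\pmod m$, and divisibility of the image by $\eta^{4}(5z)\eta^{4}(z)$ (which has order one at both cusps of $\Gamma_0(5)$) allows one to divide the factor back out and land, after replacing $z$ by $6z$, in $S_{2m-2}(\Gamma_0(180),\chi_5)_m$ --- at which point the Serre-plus-Hecke argument you wrote applies verbatim. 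If you wish to keep your low-weight setup, you must either carry out the Treneer-type argument in full (auxiliary factor congruent to $1$, explicit cusp-order estimates for $g\,|\,U_m$) or replace it by an explicit multiplication trick of this kind.
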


~

The theorem immediately implies that there are infinitely many Ramanujan-type congruences of $b_5(n)$ modulo $m$. Moreover, together with the Chinese Remainder Theorem, we obtain that if $m$ is a squarefree integer coprime to $3$, then there are infinitely many Ramanujan-type congruences of $b_5(n)$ modulo $m$.

Surprisingly, we do not know whether there is Ramanujan-type congruence of $b_5(n)$ modulo $3$.

Theorem \ref{main theorem} also implies that
$$\#\{ 0\leq n\leq X\ |\ b_5(n)\equiv0\ (\mathrm{mod}\ m) \}\gg X,$$

\noindent
where $m\geq5$ is a prime. For other residue classes $i\not\equiv0\ (\mathrm{mod}\ m)$, we also provide a useful criterion to obtain similar result.

\begin{theorem}
\label{other residue classes}
Let $m\geq5$ be a prime. If there exists one $k\in\mathbb{Z}$ such that
$$b_5\left( mk+\frac{m^2-1}{6} \right)\equiv e\not\equiv0\ (\mathrm{mod}\ m),$$

\noindent
then for each $i=1,2,\cdots,m-1$, we have
$$\#\{ 0\leq n\leq X\ |\ b_5(n)\equiv i\ (\mathrm{mod}\ m) \}\gg \frac{X}{\log X}.$$

\noindent
Moreover, if such $k$ exists, then $k<10(m-1)$.
\end{theorem}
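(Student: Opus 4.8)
The plan is to combine the generating function identity with the theory of modular forms of half-integral weight, using the distribution of coefficients in a Hecke eigenform setting. First I would write $B(q)=\sum b_5(n)q^n=\prod(1-q^{5n})/(1-q^n)$ and relate the relevant arithmetic progression to a modular form. By the standard trick, $\sum b_5(n)q^{24n+e_0}$ with $e_0=\frac{m^2-1}{6}$ (so that $24n+e_0\equiv$ a fixed square mod $24m$ once we restrict $n$ to the class $mk+\frac{m^2-1}{6}$) should be, modulo $m$, the $U$-operator image of an eta-quotient, hence a cusp form of half-integral weight on some $\Gamma_0(N)$ with $N\mid 24m^2$ (or a similar level). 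The key input from Theorem \ref{main theorem} and its proof is that the relevant form is \emph{not} identically zero modulo $m$ precisely when some single coefficient $b_5\!\left(mk+\frac{m^2-1}{6}\right)$ is nonzero mod $m$; this is the hypothesis we are given.

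Next I would invoke the machinery (in the style of Ono–Skinner, or Bruinier–Ono, or the Deligne–Serre lift together with Chebotarev) that says: a half-integral weight cusp form $f$ over $\overline{\mathbb{F}}_m$ that is nonzero has the property that for every $i\not\equiv0\pmod m$, the set of $n\le X$ with $a_f(n)\equiv i$ has order $\gg X/\log X$. Concretely, one reduces to the integer-weight situation via the Shimura lift or by multiplying by a suitable theta series / using a result on the image of $f$ under Hecke operators, obtaining a normalized Hecke eigenform whose mod-$m$ Galois representation is either reducible (handled by elementary congruences, giving $\gg X$) or irreducible with large image, in which case Chebotarev's density theorem gives a positive density of primes $p$ with $a(p)\equiv i$ for the chosen $i$, and then a sieve-type argument over products of such primes yields the $X/\log X$ lower bound. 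The passage from "nonzero mod $m$" to "hits every nonzero residue" is the conceptual heart; the $\log X$ loss is exactly the price of the prime-counting step.

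For the effective bound $k<10(m-1)$: here I would use the fact that whether the mod-$m$ form is nonzero can be detected on a \emph{finite} initial segment of coefficients, by the Sturm bound. The space in question is a space of cusp forms of weight $\le \frac{m+1}{2}$ (half-integral) on $\Gamma_0(N)$ with $N$ a fixed small multiple of $m$ (or $m^2$); its Sturm bound is $\frac{k}{12}[\mathrm{SL}_2(\mathbb{Z}):\Gamma_0(N)]\asymp \frac{(m+1)/4}{12}\cdot N\prod_{p\mid N}(1+1/p)$, and a careful bookkeeping of $N$ and the shift $e_0=\frac{m^2-1}{6}$ — using $24n+e_0 = 24(mk+\frac{m^2-1}{6})+\frac{m^2-1}{6}$ type normalization, converting the bound on the internal parameter into a bound on $k$ — should collapse to the clean linear estimate $k<10(m-1)$. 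The main obstacle I anticipate is bookkeeping: pinning down the exact level $N$, the exact half-integral weight, and the exact character so that the Sturm bound genuinely gives $10(m-1)$ rather than a larger constant; the distributional part is essentially a citation, but the constant $10$ is not, and getting it will require being careful about which eta-quotient one starts from and how the $U_m$ and $U_5$ operators affect the level.
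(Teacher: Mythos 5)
There is a genuine gap at the conceptual heart of your argument: the ``machinery'' you invoke --- that any half-integral weight cusp form which is nonzero modulo $m$ automatically has, for every $i\not\equiv0\pmod m$, at least $\gg X/\log X$ coefficients $n\le X$ with $a(n)\equiv i\pmod m$ --- is not a theorem, and as stated it is false. For example, $\eta(24z)=\sum_{n\ge1}\left(\frac{12}{n}\right)q^{n^2}$ is a half-integral weight cusp form, nonzero modulo every prime $m\ge5$, whose coefficients take only the values $0,\pm1$ and whose nonzero coefficients up to $X$ number only $O(\sqrt X)$; so neither the ``every nonzero residue'' claim nor the $X/\log X$ count follows from mere nonvanishing mod $m$. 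This is precisely why Newman-type statements are hard: one must actually use the single nonvanishing coefficient as a seed and propagate it multiplicatively. Your proposed fix via a Shimura lift to an eigenform and a Chebotarev dichotomy also does not close the gap, because the generating function in question is not a Hecke eigenform: information about eigenform coefficients mod $m$ does not transfer to the coefficients $b_5(\cdot)$ without a statement valid simultaneously for the whole (finite-dimensional) space, and your reducible case (``handled by elementary congruences, giving $\gg X$'') is not substantiated. In addition, your setup is off target: for $b_5$ the natural construction (and the one the paper uses, built in the proof of Theorem \ref{main theorem}) is an \emph{integer}-weight form, namely $\sum_{n\ge0} b_5\left(\frac{mn-1}{6}\right)q^n\in S_{2m-2}(\Gamma_0(180),\chi_5)_m$, of fixed level $180$ and weight $2m-2$, not a half-integral weight form of level dividing $24m^2$ and weight about $(m+1)/2$; this also undermines your Sturm-bound bookkeeping for $k<10(m-1)$.

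The paper's actual route is much more concrete and you should compare it with your plan: from the hypothesis one gets a seed index $s=6k+m$ with $b_5\left(\frac{ms-1}{6}\right)\equiv e\not\equiv0\pmod m$; Serre's theorem (Theorem \ref{Serre's theorem v2}) supplies a positive density set $S(m)$ of primes $l$ such that $a(nl^r)\equiv(r+1)a(n)\pmod m$ for \emph{all} forms in $S_{2m-2}(\Gamma_0(180),\chi_5)_m$ at once, which is exactly the device that replaces your eigenform reduction. Choosing $l\in S(m)$ and $r_i$ with $2(r_i+1)e\equiv i\pmod m$ produces indices with $b_5\equiv i$, and then letting a second prime $\rho\in S(m)$ vary gives
\begin{equation*}
  b_5\left(\frac{m\rho l^{r_i}s-1}{6}\right)\equiv 2(r_i+1)e\equiv i\pmod m,
\end{equation*}
so the count $\gg X/\log X$ comes simply from the Prime Number Theorem applied to the positive-density set of admissible $\rho$ --- no sieve over products of primes, no Galois-image dichotomy. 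If you want to salvage your write-up, replace the black-box distribution theorem by this seed-and-propagate argument (or prove a precise space-level statement of Serre type yourself), and redo the Sturm computation in the correct space $S_{2m-2}(\Gamma_0(180),\chi_5)$, translating the Sturm bound on the exponent $n=6k+m$ into the stated bound on $k$.
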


~

\noindent
The congruence of Gordon and Ono show that our criterion is inapplicable for the case $m=5$.

\section{Notation and definitions}

Our proof is depending on the theory of modular forms. First recall that the Dedekind's eta function is defined by:
$$\eta(z)=q^{\frac{1}{24}}\prod_{n=1}^\infty(1-q^n),$$

\noindent
where $q=e^{2\pi iz}$. It is well-known that $\eta(z)$ is holomorphic and never vanishes in the upper half plane.

Now we introduce the $U$ operator. If $j$ is a positive integer, we define $U(j)$ as follows:
\begin{equation}
    \left( \sum_{n=0}^\infty a(n)q^n \right)\ |\ U(j) = \sum_{n=0}^\infty a(jn)q^n.
\end{equation}

\noindent
Sometimes the following expression of $U(j)$ operator is more convenient for computation.
\begin{equation}
    \left( \sum_{n=0}^\infty a(n)q^n \right)\ |\ U(j) = \sum_{\genfrac{}{}{0pt}{}{n=0}{j|n}}^\infty a(n)q^{\frac{n}{j}}.
\end{equation}

\noindent
We define $M_k(\Gamma_0(N),\chi)_m$ as the reduction mod $m$ of the $q$-expansions of modular forms in $M_k(\Gamma_0(N),\chi)$ with integral coefficients. Moreover, we define $S_k(\Gamma_0(N),\chi)_m$ in a similar way.

\section{Proof of Theorem \ref{main theorem}}

Before proving Theorem \ref{main theorem}, we list some useful results. The following theorem is due to Gordon and Hughes\cite{gordon1993multiplicative}.

\begin{theorem}[B. Gordon, K. Hughes]
\label{eta-quotient}
Let $f(z)=\prod_{\delta|N}\eta(\delta z)^{r_\delta}$ be an $\eta$-quotient for which
\begin{equation}
    \notag
    \sum_{\delta|N}\delta r_\delta\equiv0\ (\mathrm{mod}\ 24),
\end{equation}
\begin{equation}
    \notag
    \sum_{\delta|N}\frac N\delta r_\delta\equiv0\ (\mathrm{mod}\ 24),
\end{equation}
\begin{equation}
    \notag
    k:=\frac12\sum_{\delta|N} r_\delta\in\mathbb{Z},
\end{equation}
\noindent
then $f(z)$ satisfies
\begin{equation}
    \notag
    f\left( \frac{az+b}{cz+d} \right)=\chi(d)(cz+d)^kf(z)
\end{equation}
\noindent
for each $\begin{pmatrix}
 a & b\\
 c & d
\end{pmatrix}\in\Gamma_0(N)$. Here $\chi$ is a Dirichlet character modulo $N$ and
\begin{equation}
    \notag
    \chi(n)=\left( \frac{(-1)^k\prod_{\delta|N}\delta^{r_\delta}}{n} \right),
\end{equation}
for each positive odd number $n$.
\end{theorem}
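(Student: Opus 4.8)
The plan is to deduce the functional equation from the classical transformation law of Dedekind's $\eta$ under the full modular group $\mathrm{SL}_2(\mathbb{Z})$, and then to identify the resulting root of unity with $\chi(d)$ by means of the two congruence hypotheses. Recall that there is an explicit $24$-th root of unity $\varepsilon(a,b,c,d)$, given for $c>0$ through a Dedekind sum by $\varepsilon(a,b,c,d)=\exp\!\big(\pi i\big(\tfrac{a+d}{12c}-s(d,c)-\tfrac14\big)\big)$, such that
\[
\eta\!\left(\frac{az+b}{cz+d}\right)=\varepsilon(a,b,c,d)\,(cz+d)^{1/2}\,\eta(z)
\]
for every $\begin{pmatrix} a&b\\ c&d\end{pmatrix}\in\mathrm{SL}_2(\mathbb{Z})$ with $c>0$, a suitable branch of the square root being understood; the case $c=0$ reduces to $\eta(z+1)=e^{\pi i/12}\eta(z)$. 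After using Dedekind reciprocity, $\varepsilon$ may alternatively be expressed, according to the parities of $c$ and $d$, as a Jacobi symbol $\big(\tfrac{d}{c}\big)$ or $\big(\tfrac{c}{d}\big)$ times an elementary exponential factor; both forms will be needed. So the first step is just to record these formulas precisely, and to note that we may assume $c\ge 0$ in what follows by replacing $\gamma$ with $-\gamma$ when necessary.

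Next I would analyse a single factor $\eta(\delta z)$ with $\delta\mid N$ under an element $\gamma=\begin{pmatrix} a&b\\ c&d\end{pmatrix}\in\Gamma_0(N)$. Since $N\mid c$ and $\delta\mid N$, the matrix $\gamma_\delta:=\begin{pmatrix} a&\delta b\\ c/\delta& d\end{pmatrix}$ has integer entries and determinant $1$, and one checks directly that $\gamma_\delta(\delta z)=\delta\,\gamma z$ while $(c/\delta)(\delta z)+d=cz+d$. Applying the $\eta$-law to $\gamma_\delta$ therefore gives $\eta(\delta\gamma z)=\varepsilon(a,\delta b,c/\delta,d)\,(cz+d)^{1/2}\eta(\delta z)$. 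Multiplying over $\delta\mid N$ with exponents $r_\delta$, the half-integral weight factors combine to $(cz+d)^{\frac12\sum_{\delta}r_\delta}=(cz+d)^{k}$ with $k\in\mathbb{Z}$ by hypothesis, while the $\eta(\delta z)^{r_\delta}$ reassemble into $f(z)$; hence
\[
f(\gamma z)=\Big(\prod_{\delta\mid N}\varepsilon(a,\delta b,c/\delta,d)^{r_\delta}\Big)\,(cz+d)^{k}\,f(z),
\]
and the entire problem is reduced to showing that the root-of-unity prefactor equals $\chi(d)$.

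For this last and most delicate step I would substitute the Dedekind-sum expression for each $\varepsilon(a,\delta b,c/\delta,d)$ and apply reciprocity to the sums $s(d,c/\delta)$; the exponent of the total phase then separates into a term linear in $\delta$ (with coefficient proportional to $\sum_{\delta}\delta r_\delta$), a term linear in $N/\delta$ (with coefficient proportional to $\sum_{\delta}(N/\delta) r_\delta$), and a quadratic-symbol contribution. The hypotheses $\sum_{\delta\mid N}\delta r_\delta\equiv0$ and $\sum_{\delta\mid N}(N/\delta) r_\delta\equiv0\pmod{24}$ are exactly what annihilate the two linear terms, and what remains assembles, via quadratic reciprocity for the Jacobi symbol and a short case analysis on the parities of $c$ and $d$, into $\big(\tfrac{(-1)^{k}\prod_{\delta\mid N}\delta^{r_\delta}}{d}\big)$, i.e.\ $\chi(d)$ — with minor care when $d\le 0$, where one uses the Kronecker-symbol conventions and $\chi(-1)=(-1)^k$. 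Finally, the case $c=0$ (so $\gamma=\pm\begin{pmatrix}1&b\\0&1\end{pmatrix}$, $d=\pm1$) follows at once from $\eta(\delta z+\delta b)=e^{\pi i\delta b/12}\eta(\delta z)$ together with $\sum_{\delta}\delta r_\delta\equiv0\pmod{24}$ and the sign relation just mentioned. I expect the main obstacle to be precisely this bookkeeping: invoking Dedekind reciprocity with the correct normalization, keeping track of signs, and resolving the parity cases so that the phase that survives is exactly the asserted Kronecker symbol and not that symbol spoiled by a stray sign or root of unity.
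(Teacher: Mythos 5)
The paper does not prove this statement at all: it is imported verbatim from Gordon--Hughes (and appears in Ono's \emph{Web of Modularity} as Theorem 1.64), so there is no internal proof to compare against; your attempt can only be measured against the classical argument, which is exactly the route you describe. Your structural reductions are correct and are the standard ones: the conjugated matrices $\gamma_\delta=\begin{pmatrix} a & \delta b\\ c/\delta & d\end{pmatrix}$ do lie in $\mathrm{SL}_2(\mathbb{Z})$ because $\delta\mid N\mid c$, the identity $\gamma_\delta(\delta z)=\delta\gamma z$ with automorphy factor $(c/\delta)(\delta z)+d=cz+d$ is right, and the half-integral powers assemble to $(cz+d)^k$ precisely because $\sum_\delta r_\delta=2k$ (with the usual caveat that some $r_\delta$ are negative, so one must fix a consistent branch of $(cz+d)^{1/2}$ before raising it to the $2k$-th power).

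The genuine shortfall is that the step carrying the entire content of the theorem is only announced, not performed. Everything up to the displayed identity $f(\gamma z)=\bigl(\prod_\delta \varepsilon(a,\delta b,c/\delta,d)^{r_\delta}\bigr)(cz+d)^k f(z)$ is routine; the theorem \emph{is} the evaluation of that product of $24$-th roots of unity as $\left(\frac{(-1)^k\prod_\delta \delta^{r_\delta}}{d}\right)$, and you defer exactly this to ``bookkeeping.'' To close it you must actually substitute the Jacobi-symbol form of the $\eta$-multiplier (the $d$-odd form $\varepsilon=\bigl(\tfrac{c}{|d|}\bigr)\exp\bigl(\tfrac{\pi i}{12}((a+d)c-bd(c^2-1)+3d-3-3cd)\bigr)$ is the convenient one, since in $\Gamma_0(N)$ one may reduce to $d$ odd), check that the exponential contributions sum to $\tfrac{\pi i}{12}$ times quantities controlled by $\sum_\delta \delta r_\delta$ and $\sum_\delta (N/\delta) r_\delta$ modulo $24$, and then use multiplicativity of the symbol, $\bigl(\tfrac{c}{|d|}\bigr)^{2k}=1$, and the sign conventions at $d<0$ to produce the factor $(-1)^k$ inside the symbol. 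None of this is conceptually hard, but since it is where a stray eighth root of unity or sign would falsify the stated character, the proposal as written is an accurate plan rather than a proof; also note the statement only defines $\chi$ on odd $n$, so your parity case analysis for even $d$ should end by invoking that restriction rather than proving more than is claimed.
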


~

Though Theorem \ref{eta-quotient} ensures that $f(z)$ is weakly modular, we need to show that the order at the cusps of $\Gamma_0(N)$ are nonnegative(resp. positive) to obtain that $f(z)$ is a modular(resp. cusp) form. The following theorem of Martin\cite{martin1996multiplicative, ono2004web} provides the explicit expression of the order at cusps.

\begin{theorem}[Y. Martin]
\label{order of cusps}
Let $c,d,N$ be positive integers with $d|N$ and $(c,d)=1$ and $f(z)$ is an $\eta$-quotient satisfying the conditions of Theorem \ref{eta-quotient}, then the order of vanishing of $f(z)$ at the cusp $\frac{c}{d}$ is
\begin{equation}
    \notag
    \frac{N}{24}\sum_{\delta|N}\frac{r_\delta(d^2,\delta^2)}{\delta(d^2,N)}.
\end{equation}
\end{theorem}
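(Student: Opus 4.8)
The plan is to reduce everything to a single factor $\eta(\delta z)$ and to exploit the classical transformation law of $\eta$ under $SL_2(\mathbb Z)$. Fix the cusp $c/d$ and choose $\sigma=\begin{pmatrix} c & c'\\ d & d'\end{pmatrix}\in SL_2(\mathbb Z)$ with $cd'-c'd=1$, so that $\sigma\infty=c/d$. Since Theorem \ref{eta-quotient} already guarantees that $f$ is weight-$k$ weakly modular on $\Gamma_0(N)$ with character $\chi$, the order of $f$ at $c/d$ is by definition the leading exponent, in the local uniformizer $e^{2\pi i z/h}$, of the slashed form $(f|_k\sigma)(z)=(dz+d')^{-k}f(\sigma z)$, where $h$ is the width of the cusp. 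The key analytic input is $\eta(\gamma z)=\nu(\gamma)(c_\gamma z+d_\gamma)^{1/2}\eta(z)$ for $\gamma\in SL_2(\mathbb Z)$, with $\nu(\gamma)$ a $24$-th root of unity; because $\eta$ is holomorphic and nonvanishing on the upper half plane, all of these multipliers and half-integral automorphy factors will, after assembly with $(dz+d')^{-k}$, contribute a factor with a finite nonzero limit at the cusp, and hence will not affect the order.

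First I would compute the contribution of one divisor $\delta\mid N$. Write $A_\delta=\begin{pmatrix}\delta&0\\0&1\end{pmatrix}\sigma=\begin{pmatrix}\delta c & \delta c'\\ d & d'\end{pmatrix}$, an integer matrix of determinant $\delta$, and set $g=\gcd(\delta,d)$. Using $\gcd(c,d)=1$ one has $\gcd(\delta c,d)=g$, so the Hermite normal form yields $\gamma_\delta\in SL_2(\mathbb Z)$ and an integer $j$ with $A_\delta=\gamma_\delta\begin{pmatrix} g & j\\ 0 & \delta/g\end{pmatrix}$. Writing $w=\dfrac{g^2z+gj}{\delta}$ for the image of $z$ under the triangular factor and applying the transformation law gives $\eta(\delta\sigma z)=\nu(\gamma_\delta)(c_{\gamma_\delta}w+d_{\gamma_\delta})^{1/2}\eta(w)$. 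Since $e^{2\pi i w}=\zeta\,e^{2\pi i (g^2/\delta)z}$ for some root of unity $\zeta$, the function $\eta(w)=e^{2\pi i w/24}\prod_{n\ge1}(1-e^{2\pi i nw})$ has leading term a nonzero multiple of $\big(e^{2\pi i z}\big)^{g^2/(24\delta)}$, whence
\begin{equation}
\notag
\operatorname{ord}_{e^{2\pi i z}}\eta(\delta\sigma z)=\frac{g^2}{24\,\delta}=\frac{\gcd(\delta,d)^2}{24\,\delta}.
\end{equation}

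Taking the product over $\delta\mid N$ with multiplicities and dividing out the assembled automorphy factor (which has a finite nonzero limit at the cusp, as noted above) shows that the leading exponent of $(f|_k\sigma)$ in $e^{2\pi i z}$ equals $\sum_{\delta\mid N} r_\delta\gcd(\delta,d)^2/(24\delta)$. To convert this into an exponent in the local uniformizer $e^{2\pi i z/h}$ I multiply by the width $h=N/\gcd(d^2,N)$ of the cusp $c/d$ on $\Gamma_0(N)$, obtaining
\begin{equation}
\notag
\frac{N}{\gcd(d^2,N)}\sum_{\delta\mid N}\frac{r_\delta\,\gcd(\delta,d)^2}{24\,\delta}
=\frac{N}{24}\sum_{\delta\mid N}\frac{r_\delta\,(d^2,\delta^2)}{\delta\,(d^2,N)},
\end{equation}
where I used $\gcd(\delta,d)^2=(d^2,\delta^2)$. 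This is precisely Martin's formula.

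The step I expect to be the main obstacle is the matrix bookkeeping in the second paragraph: verifying that the gcd of the first column of $A_\delta$ is exactly $\gcd(\delta,d)$, extracting the correct Hermite form, and confirming that the scaling hidden in $w$ is $g^2/\delta$ rather than $g/\delta$. Close behind is the need to make precise the claim that the multipliers $\nu(\gamma_\delta)$ together with the half-integral automorphy factors and the weight factor $(dz+d')^{-k}$ combine into something with a finite nonzero limit at the cusp; this is where the weight $k=\frac12\sum_\delta r_\delta$ enters, since the $z^{1/2}$ growth of each eta factor must cancel against $(dz+d')^{-k}$. Finally I must justify the width formula $h=N/\gcd(d^2,N)$ from the standard description of the cusps of $\Gamma_0(N)$; once all of this is in place, the concluding identity is elementary, using $d\mid N$ to rewrite $\gcd(d^2,N)=d\,\gcd(d,N/d)$ when the alternative presentation is wanted.
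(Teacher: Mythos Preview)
The paper does not actually prove this statement: Theorem \ref{order of cusps} is quoted without proof as a known result of Martin, with references to \cite{martin1996multiplicative, ono2004web}. So there is no ``paper's own proof'' to compare your attempt against.

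That said, your outline is the standard and correct argument. A couple of points worth tightening. First, your claim that the multipliers and automorphy factors combine to something with a finite nonzero limit can be made precise via the cocycle relation: from $A_\delta=\gamma_\delta T$ with $T=\begin{pmatrix} g & j\\ 0 & \delta/g\end{pmatrix}$ one gets $c_{\gamma_\delta}w+d_{\gamma_\delta}=\dfrac{g}{\delta}(dz+d')$, so the product of the $(c_{\gamma_\delta}w+d_{\gamma_\delta})^{r_\delta/2}$ over all $\delta$ is a nonzero constant times $(dz+d')^{k}$, which exactly cancels the weight factor in $f|_k\sigma$. Second, the identity $\gcd(\delta,d)^2=(d^2,\delta^2)$ is indeed correct (check prime by prime), and the width formula $h=N/\gcd(d^2,N)$ for $\Gamma_0(N)$ is the standard one when $d\mid N$. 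With these details filled in, your proposal is a complete proof; the paper simply takes the formula on faith from the literature.
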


~

The following theorem due to Serre\cite{serre1974divisibilite} show that cusp forms have very nice arithmetic properties.

\begin{theorem}[J.-P. Serre]
\label{Serre's theorem}
The set of primes $l\equiv-1\ (\mathrm{mod}\ Nm)$ such that
$$f\ |\ T(l)\equiv0\ (\mathrm{mod}\ m)$$

\noindent
for each $f(z)\in S_k(\Gamma_0(N),\psi)_m$ has positive density, where $T(l)$ denotes the usual Hecke operator acting on $S_k(\Gamma_0(N),\psi)$.
\end{theorem}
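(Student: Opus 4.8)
The plan is to translate the desired operator vanishing into a statement about Frobenius elements and then apply the Chebotarev density theorem. Write $V:=S_k(\Gamma_0(N),\psi)_m$, a finite-dimensional $\mathbb{F}_m$-vector space, and let $A\subseteq\mathrm{End}_{\mathbb{F}_m}(V)$ be the finite commutative $\mathbb{F}_m$-algebra generated by the Hecke operators $T(l)$ with $l\nmid Nm$. The assertion ``$f\mid T(l)\equiv0\ (\mathrm{mod}\ m)$ for each $f\in V$'' is exactly the statement that $T(l)$ is the zero element of $A$. Thus it suffices to produce a positive-density set of primes $l\equiv-1\ (\mathrm{mod}\ Nm)$ with $T(l)=0$ in $A$.

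First I would attach to $V$ a two-dimensional Galois pseudo-representation over $A$: a continuous trace function $\tau\colon\mathrm{Gal}(\overline{\mathbb{Q}}/\mathbb{Q})\to A$ together with a determinant $\delta\colon\mathrm{Gal}(\overline{\mathbb{Q}}/\mathbb{Q})\to A^\times$, unramified outside $Nm$, satisfying
$$\tau(\mathrm{Frob}_l)=T(l),\qquad \delta(\mathrm{Frob}_l)=\psi(l)\,l^{k-1}\quad(l\nmid Nm).$$
This is the standard Galois representation carried by the Hecke algebra: the mod-$m$ representations of Deligne and Deligne--Serre attached to the eigenforms occurring in $V$ assemble, over the Hecke algebra, into such a pseudo-representation, whose determinant is the nebentypus times the $(k-1)$st power of the mod-$m$ cyclotomic character. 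Since $A$ is finite, $\tau$ and $\delta$ factor through a finite Galois quotient $\mathrm{Gal}(K/\mathbb{Q})$, and I would enlarge $K$ so that $\mathbb{Q}(\zeta_{Nm})\subseteq K$.

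The key local computation is at complex conjugation $c$. Because $S_k(\Gamma_0(N),\psi)\neq0$ forces the parity $\psi(-1)=(-1)^k$, one gets $\delta(c)=\psi(-1)(-1)^{k-1}=-1$. As $c=c^{-1}$, the two-dimensional pseudo-representation identity $\tau(c^{-1})=\delta(c)^{-1}\tau(c)$ gives $\tau(c)=-\tau(c)$, so $2\tau(c)=0$; since $m\geq5$ this yields $\tau(c)=0$ in $A$. Morally $c$ is odd with eigenvalues $\pm1$, hence trace $0$, but the pseudo-representation formalism delivers the vanishing as an honest identity in $A$ rather than merely modulo the nilradical, which is precisely what upgrades eigenvalue vanishing to operator vanishing. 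At the same time, the restriction of $c$ to $\mathbb{Q}(\zeta_{Nm})$ is the automorphism $\zeta\mapsto\zeta^{-1}$, that is, the class $-1\in(\mathbb{Z}/Nm)^\times$.

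Finally I would invoke Chebotarev. The complex conjugations form a single nonempty conjugacy class $C$ in $\mathrm{Gal}(K/\mathbb{Q})$, so the set of primes $l$ with $\mathrm{Frob}_l\in C$ has positive density. For each such $l$, restricting $\mathrm{Frob}_l$ to $\mathbb{Q}(\zeta_{Nm})$ shows $l\equiv-1\ (\mathrm{mod}\ Nm)$, while the class-function property of $\tau$ gives $T(l)=\tau(\mathrm{Frob}_l)=\tau(c)=0$ in $A$, which is the required vanishing on all of $V$. I expect the genuine obstacle to lie in the pseudo-representation step: the existence of the $A$-valued pseudo-representation with trace equal to the Hecke operators and the stated determinant is the deep input (Eichler--Shimura in weight $2$, Deligne in higher weight, Deligne--Serre in weight $1$), and it is this input---rather than the formal Chebotarev argument---that makes the operator-level, and not merely eigenvalue-level, congruence hold.
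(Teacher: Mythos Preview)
The paper does not prove this statement; it is quoted as a result of Serre and used as a black box in the proof of Theorem~\ref{main theorem}. Your sketch is a correct modern account of the argument behind Serre's theorem: build the $A$-valued pseudo-representation on the mod-$m$ Hecke algebra (the genuinely deep input, as you correctly flag), verify via the identity $\tau(g^{-1})=\delta(g)^{-1}\tau(g)$ and the parity condition $\psi(-1)=(-1)^k$ that complex conjugation has trace zero in $A$ and restricts to $-1\in(\mathbb{Z}/Nm\mathbb{Z})^\times$, and then apply Chebotarev to the conjugacy class of complex conjugation. One small caveat: your inference $2\tau(c)=0\Rightarrow\tau(c)=0$ requires $m$ odd, which is not part of the hypothesis as stated but is harmless here since the paper only invokes the theorem for primes $m\geq5$.
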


~

\begin{proof}[Proof of Theorem \ref{main theorem}]
For a fixed prime $m$, let
\begin{equation}
    f(m;z):=\frac{\eta(5z)}{\eta(z)}\eta^a(5mz)\eta^b(mz),
\end{equation}
\noindent
where $m':=(m\ \mathrm{mod}\ 6)$ and $a:=5-m',\ b:=m'-1$. It is easy to show that $\modulo{f(m;z)}{\eta^{am+1}(5z)\eta^{bm-1}(z)}{m}$ and
$$\eta^{am+1}(5z)\eta^{bm-1}(z)\in S_{2m}(\Gamma_0(5),\chi_5),$$

\noindent
where $\chi_5(n)=\bracket{\frac{n}{5}}$. On the other hand,
\begin{equation}
    f(m;z)=\sum_{n=0}^\infty b_5(n)q^{\frac{24n+m(5a+b)+4}{24}}\cdot\prod_{n=1}^\infty(1-q^{5mn})^a(1-q^{mn})^b.
\end{equation}
\noindent
Acting the $U(m)$ operator on $f(z)$ and since $\modulo{U(m)}{T(m)}{m}$, obtaining
\begin{equation}
    \label{after U/T}
    \modulo{\sum_{n=0}^\infty b_5(n)q^{\frac{24n+m(5a+b)+4}{24}}\ |\ U(m)}{\frac{\eta^{am+1}(5z)\eta^{bm-1}(z)\ |\ T(m)}{\prod_{n=1}^\infty(1-q^{5n})^a(1-q^{n})^b}}{m},
\end{equation}

\noindent
where $T(m)$ denotes usual Hecke operator acting on $S_{2m}(\Gamma_0(5),\chi_5)$. As for the LHS of (\ref{after U/T}), we have
\begin{equation}
    \sum_{n=0}^\infty b_5(n)q^{\frac{24n+m(5a+b)+4}{24}}\ |\ U(m)=\sum_{\genfrac{}{}{0pt}{}{n=0}{m|6n+1}}^\infty b_5(n)q^{\frac{24n+m(5a+b)+4}{24m}}.
\end{equation}

\noindent
Using Theorem \ref{eta-quotient} and \ref{order of cusps}, one can verify that $\eta^4(5z)\eta^4(z)\in S_4(\Gamma_0(5))$ and have the order of $1$ at all cusps. Thus we can write $\eta^{am+1}(5z)\eta^{bm-1}(z)\ |\ T(m)=\eta^4(5z)\eta^4(z)g(m;z)$, where $g(m;z)\in M_{2m-4}(\Gamma_0(5),\chi_5)_m$. Hence
\begin{equation}
    \modulo{\sum_{\genfrac{}{}{0pt}{}{n=0}{m|6n+1}}^\infty b_5(n)q^{\frac{6n+1}{6m}}}{\eta^{4-a}(5z)\eta^{4-b}(z)g(m;z)}{m}.
\end{equation}

\noindent
Replacing $q$ by $q^6$ shows that
\begin{equation}
    \modulo{\sum_{\genfrac{}{}{0pt}{}{n=0}{m|6n+1}}^\infty b_5(n)q^{\frac{6n+1}{m}}}{\eta^{4-a}(30z)\eta^{4-b}(6z)g(m;6z)}{m}.
\end{equation}

\noindent
Since $b_5(n)$ vanishes for non-integer $n$, so
\begin{equation}
    \modulo{\sum_{n=0}^\infty b_5\bracket{\frac{mn-1}{6}}q^{n}}{\eta^{4-a}(30z)\eta^{4-b}(6z)g(m;6z)}{m}.
\end{equation}

\noindent
Moreover, one can verify that $\eta^{4-a}(30z)\eta^{4-b}(6z)\in S_2(\Gamma_0(180))$. Let
\begin{equation}
    \sum_{n=0}^\infty a(n)q^n=\eta^{4-a}(30z)\eta^{4-b}(6z)g(m;6z)\in S_{2m-2}(\Gamma_0(180),\chi_5).
\end{equation}

\noindent
By Theorem \ref{Serre's theorem}, the set of primes $l$ such that
$$\sum_{n=0}^\infty a(n)q^n\ |\ T(l)\equiv0\ (\mathrm{mod}\ m)$$

\noindent
has positive density, where $T(l)$ denotes Hecke operator acting on $S_2(\Gamma_0(180),\chi_5)$. Moreover, by the theory of Hecke operator, we have
\begin{equation}
    \sum_{n=0}^\infty a(n)q^n\ |\ T(l)=\sum_{n=0}^\infty \bracket{a(ln)+\bracket{\frac{l}{5}}l^{2m-3}a\bracket{\frac{n}{l}}}q^n.
\end{equation}

Since $a(n)$ vanishes for non-integer $n$, $a(n/l)=0$ when $(n,l)=1$. Thus $\modulo{a(ln)}{0}{m}$ when $(n,l)=1$. Recalling that $\modulo{a(n)}{b_5\bracket{\frac{mn-1}{6}}}{m}$, we obtain
\begin{equation}
    \modulo{b_5\bracket{\frac{mln-1}{6}}}{0}{m}
\end{equation}

\noindent
satisfied for each integer $n$ with $(n,l)=1$.

\end{proof}

\section{Proof of Theorem \ref{other residue classes}}

First we recall another important theorem of Serre\cite{serre1974divisibilite}.

\begin{theorem}[J.-P. Serre]
\label{Serre's theorem v2}
The set of primes $l\equiv1\ (\mathrm{mod}\ Nm)$ such that
$$a(nl^r)\equiv(r+1)a(n)\ (\mathrm{mod}\ m)$$

\noindent
for each $f(z)=\sum_{n=0}^\infty a(n)q^n\in S_k(\Gamma_0(N),\psi)_m$ has positive density, where $r$ is a positive integer and $n$ is coprime to $l$.
\end{theorem}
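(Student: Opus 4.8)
The plan is to reduce this statement, which quantifies over all $f\in S_k(\Gamma_0(N),\psi)_m$ and all $r\ge1$, to a single operator identity on the finite-dimensional space $S_k(\Gamma_0(N),\psi)_m$, and then to realize the required primes through Galois representations together with the Chebotarev density theorem. First I would fix a prime $l\equiv1\pmod{Nm}$, so that $\psi(l)\equiv1$ and $l^{k-1}\equiv1\pmod m$. For $n$ coprime to $l$ the coefficient of $q^n$ in $f\,|\,T(l^r)$ is exactly $a(l^rn)$, because every other term in the Hecke formula involves some $a(n/l^j)$ with $j\ge1$, which vanishes. The Hecke recursion $T(l^{r+1})=T(l)T(l^r)-\psi(l)l^{k-1}T(l^{r-1})$ therefore reduces modulo $m$ to
\begin{equation}
    \notag
    T(l^{r+1})\equiv T(l)\,T(l^r)-T(l^{r-1})\pmod m,
\end{equation}
so that if $T(l)\equiv2\pmod m$ as an operator on $S_k(\Gamma_0(N),\psi)_m$, an immediate induction from $T(l^0)\equiv1$ and $T(l^1)\equiv2$ gives $T(l^r)\equiv(r+1)\pmod m$ for every $r$. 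Reading off the coefficient of $q^n$ then yields $a(l^rn)\equiv(r+1)a(n)\pmod m$ for all $f$ and all $n$ coprime to $l$. Thus it is enough to exhibit a positive density of primes $l\equiv1\pmod{Nm}$ for which $T(l)\equiv2\pmod m$ on the whole space.

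Next I would bring in the Galois representations attached to cusp forms. The integral Hecke algebra $\mathbb{T}$ generated by the operators $T(l)$ with $l\nmid Nm$ is a finitely generated $\mathbb{Z}$-module, so $\mathbb{T}/m\mathbb{T}$ is a finite ring and $T(l)\bmod m$ assumes only finitely many values. By Deligne's construction, assembled over a basis of Hecke eigenforms, there is a continuous two-dimensional representation (equivalently a pseudo-representation) $\rho$ valued in $\mathbb{T}/m\mathbb{T}$, unramified outside $Nm$, whose Frobenius data satisfy $\operatorname{tr}\rho(\mathrm{Frob}_l)\equiv T(l)$ and $\det\rho(\mathrm{Frob}_l)\equiv\psi(l)l^{k-1}\pmod m$. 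Composing $\rho$ with the mod-$Nm$ cyclotomic character, the assignment $l\mapsto(T(l)\bmod m,\,l\bmod Nm)$ factors through a finite Galois group $G=\mathrm{Gal}(K/\mathbb{Q})$ with $K\supseteq\mathbb{Q}(\zeta_{Nm})$.

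I would then finish with Chebotarev. The primes $l$ that split completely in $K$ are precisely those with $\mathrm{Frob}_l=\mathrm{id}$; for each such $l$ one has $l\equiv1\pmod{Nm}$ and, since the identity matrix has trace $2$, $T(l)\equiv2\pmod m$. By the Chebotarev density theorem this set of primes has density $1/[K:\mathbb{Q}]>0$, and by the first step every such prime satisfies the desired congruence, which proves the theorem.

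I expect the main obstacle to be the passage from the eigenvalue statement — that each Hecke eigenform $f_i$ has $a_i(l)\equiv2\pmod m$ — to the genuine operator identity $T(l)\equiv2$ on the possibly non-semisimple algebra $\mathbb{T}/m\mathbb{T}$. A naive diagonalization of $T(l)$ over $\bar{\mathbb{Q}}$ need not survive reduction modulo $m$, owing to congruences between distinct eigenforms. This is exactly why I would work with a representation valued in the Hecke algebra itself rather than with the individual residual representations $\bar\rho_i$: the triviality of $\mathrm{Frob}_l$ forces $T(l)=\operatorname{tr}\rho(\mathrm{Frob}_l)$ to equal $2$ in $\mathbb{T}/m\mathbb{T}$, and hence as an operator on the whole space, not merely on each Jordan--H\"older constituent.
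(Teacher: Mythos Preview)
The paper does not prove this statement; it is quoted as a theorem of Serre with a reference to his 1974 paper and then used as a black box in the proof of Theorem~\ref{other residue classes}. There is therefore no in-paper argument to compare your proposal against.

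For what it is worth, your outline is the standard proof and is essentially Serre's own. For $l\equiv1\pmod{Nm}$ one has $\psi(l)l^{k-1}\equiv1\pmod m$, so the Hecke recursion collapses to $T(l^{r+1})\equiv T(l)T(l^r)-T(l^{r-1})$, and the single operator congruence $T(l)\equiv2$ forces $T(l^r)\equiv r+1$ by induction; since for $(n,l)=1$ the $n$th coefficient of $f\,|\,T(l^r)$ is $a(l^rn)$, the desired conclusion follows. Producing a positive-density set of such $l$ by taking the primes split completely in the finite extension cut out by the $\mathbb{T}/m\mathbb{T}$-valued residual (pseudo\nobreakdash-)representation together with $\mathbb{Q}(\zeta_{Nm})$, and invoking Chebotarev, is exactly the right mechanism. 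Your final remark is also well placed: working with the Hecke-algebra-valued representation rather than the individual $\bar\rho_i$ is precisely what upgrades ``each eigenvalue is $2$'' to the honest operator identity $T(l)\equiv2$ on $S_k(\Gamma_0(N),\psi)_m$.
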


~

Here we introduce a theorem of Sturm, which provide a useful criterion to get some congruences via finite computation. Variants of Sturm's Theorem are stated in \cite{murty1997congruences, ono2004web, sturm1987congruence}.

\begin{theorem}[J. Sturm]
\label{Sturm's theorem}
Suppose $f(z)=\sum_{n=0}^\infty a(n)q^n\in M_k(\Gamma_0(N),\chi)_m$ such that
$$a(n)\equiv0\ (\mathrm{mod}\ m)$$

\noindent
for all $n\leq \frac{kN}{12}\prod_{p|N}\left( 1+\frac1p \right)$. Then $a(n)\equiv0\ (\mathrm{mod}\ m)$ for all $n\in\mathbb{Z}$.
\end{theorem}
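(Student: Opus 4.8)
The plan is to convert the hypothesis on the $q$-coefficients into a statement about the order of vanishing at the cusp $\infty$ and then to bound that order by the valence formula. Write $\mu:=[\mathrm{SL}_2(\mathbb Z):\Gamma_0(N)]=N\prod_{p\mid N}(1+\frac1p)$, so the Sturm bound is exactly $B:=\frac{k\mu}{12}=\frac{kN}{12}\prod_{p\mid N}(1+\frac1p)$. Suppose for contradiction that $\bar f\neq0$ in $M_k(\Gamma_0(N),\chi)_m$ while $a(n)\equiv0\ (\mathrm{mod}\ m)$ for all $n\le B$; then $\bar f$ is a nonzero mod $m$ form whose $q$-expansion vanishes at $\infty$ to order strictly greater than $B$, where throughout ``order at $\infty$'' means the smallest index $n$ with $a(n)\not\equiv0\ (\mathrm{mod}\ m)$. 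The goal is to show this forces $\bar f\equiv0$, the desired contradiction.

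The first step is to pass to level $1$ and trivial character by a norm construction. Choosing right coset representatives $\gamma_1=I,\gamma_2,\dots,\gamma_\mu$ for $\Gamma_0(N)\backslash\mathrm{SL}_2(\mathbb Z)$, I set $g:=\prod_{i=1}^{\mu}f|_k\gamma_i$. Right multiplication by $\mathrm{SL}_2(\mathbb Z)$ permutes the cosets and scales the factors by values of $\chi$, so $g$ transforms under a character of $\mathrm{SL}_2(\mathbb Z)$, which factors through the abelianization $\mathbb Z/12$; hence $g^{12}\in M_{12k\mu}(\mathrm{SL}_2(\mathbb Z))$ with trivial character. Since $\gamma_1=I$ and the cusp $\infty$ has width $1$, the factor $f|_k\gamma_1=f$ contributes $\mathrm{ord}_\infty(f)$ while the remaining holomorphic factors contribute nonnegatively, giving $\mathrm{ord}_\infty(g^{12})=12\,\mathrm{ord}_\infty(g)\ge12\,\mathrm{ord}_\infty(f)>12B=\frac{1}{12}\,\mathrm{wt}(g^{12})$. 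For a level $1$ form the valence formula forces the order at $\infty$ to be at most $\mathrm{wt}/12$, so $g^{12}=0$; as the graded ring of level $1$ forms is an integral domain, one factor $f|_k\gamma_i$, and therefore $f$, must vanish. This yields the statement over $\mathbb C$ with the correct constant $B$.

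The genuinely hard part is to carry out this argument modulo $m$ rather than over $\mathbb C$, and this is where I expect the main obstacle to lie. The valence formula is a characteristic-zero statement, and reduction modulo $m$ can only \emph{increase} the apparent order of vanishing at $\infty$, so the entire norm argument must be run on $\bar f$ from the outset. The level $1$ valence bound has a clean mod $m$ analogue: for $m\ge5$ the reduction of the graded algebra of level $1$ forms is the polynomial ring $\mathbb F_m[E_4,E_6]$, and $\Delta=q\prod_{n\ge1}(1-q^n)^{24}$ reduces to a form with leading coefficient $1$, so it serves as a uniformizer at the cusp; a nonzero weight-$w$ mod $m$ form whose expansion vanishes to order $>w/12$ would then be divisible by a power of $\Delta$ exceeding its weight, which is impossible, and this closes the contradiction exactly as above. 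The remaining technical difficulty is the bookkeeping in the norm step: for $\gamma_i\neq I$ the factor $\bar f|_k\gamma_i$ is an expansion at another cusp of $\Gamma_0(N)$, involving roots of unity $\zeta_N$ and fractional powers of $q$, so one must extend scalars to $\overline{\mathbb F}_m$, check that forming the product and reducing modulo $m$ commute, and verify that the order of $\bar g$ at $\infty$ is governed by the single factor coming from $\gamma_1=I$. Establishing this compatibility — equivalently, working on a suitable integral model of $X_0(N)$ and tracking only the cusp $\infty$ — is the real work; once it is in place, the bound $B=\frac{kN}{12}\prod_{p\mid N}(1+\frac1p)$ drops out of the level $1$ valence computation.
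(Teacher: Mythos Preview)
The paper does not give its own proof of this statement: Sturm's theorem is quoted as a tool, with references to \cite{murty1997congruences, ono2004web, sturm1987congruence}, and no argument is supplied. So there is no in-paper proof to compare your proposal against.

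That said, your sketch is essentially Sturm's original argument (the norm-to-level-one plus valence bound), and it is the standard route. Two small remarks on the write-up. First, in the characteristic-zero paragraph you correctly get $\mathrm{ord}_\infty(g^{12})>12B=k\mu$, and the valence inequality for a nonzero level $1$ form of weight $12k\mu$ is $\mathrm{ord}_\infty\le k\mu$, so the contradiction is immediate; your phrasing ``$>\tfrac{1}{12}\,\mathrm{wt}(g^{12})$'' is the same thing but could be made explicit. Second, in the mod $m$ step the point you flag as ``the real work'' is exactly the place where a careful proof has to clear denominators: for $\gamma_i\neq I$ the slash $f|_k\gamma_i$ has Fourier coefficients in $\mathbb Z[\zeta_N,1/N]$, so one multiplies by a suitable power of $N$ (or works at a prime $\mathfrak m$ of $\mathbb Z[\zeta_N]$ above $m$ with $m\nmid N$) before reducing, and then uses that the $\mathfrak m$-adic order of vanishing of the product at $\infty$ is at least the $\mathfrak m$-adic order of the factor coming from $\gamma_1=I$. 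Once that integrality bookkeeping is in place, your $\Delta$-divisibility argument in $\mathbb F_m[E_4,E_6]$ (valid for $m\ge5$) closes the contradiction cleanly. This matches the proofs in the references the paper cites.
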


~

\begin{proof}[Proof of Theorem \ref{other residue classes}]
Let $m\geq5$ be a prime. Suppose $k\in\mathbb{Z}$ such that
$$b_5\left( mk+\frac{m^2-1}{6} \right)\equiv e\not\equiv0\ (\mathrm{mod}\ m),$$

\noindent
let $s=6k+m$. Since $b_5(n)$ vanishes for negative $n$, we have $mk+(m^2-1)/6\geq0$. Hence $s=6k+m>0$ and
$$b_5\left(\frac{ms-1}{6}\right)=b_5\left( mk+\frac{m^2-1}{6} \right)\equiv e\ (\mathrm{mod}\ m).$$

\noindent
For a fix prime $m\geq5$, let $S(m)$ denote the set of primes $l$ such that
$$a(nl^r)\equiv(r+1)a(n)\ (\mathrm{mod}\ m)$$

\noindent
for each $f(z)=\sum_{n=0}^\infty a(n)q^n\in S_{2m-2}(\Gamma_0(180),\chi_{5})_m$, where $r$ is a positive integer and $(n,l)=1$. Recalling that $\sum_{n=0}^\infty b_5\left(\frac{mn-1}{6}\right)q^n\in S_{2m-2}(\Gamma_0(180),\chi_{5})_m$. Since $S(m)$ is infinite by Theorem \ref{Serre's theorem v2}, choose $l\in S(m)$ such that $l>s$, then
$$b_5\left(\frac{ml^rs-1}{6}\right)\equiv(r+1)b_5\left(\frac{ms-1}{6}\right)\equiv(r+1)e\ (\mathrm{mod}\ m).$$

\noindent
Now we fix $l$, choose $\rho\in S(m)$ such that $\rho>l$, then
\begin{equation}
    \label{rho}
    b_5\left(\frac{m\rho n-1}{6}\right)\equiv2b_5\left(\frac{mn-1}{6}\right)\ (\mathrm{mod}\ m)
\end{equation}

\noindent
satisfied for each $n$ coprime to $\rho$. For each $i=1,2,\cdots,m-1$, let $r_i\equiv i(2e)^{-1}-1\ (\mathrm{mod}\ m)$ and $r_i>0$. Let $n=l^{r_i}s$ in (\ref{rho}), we obtain
$$b_5\left(\frac{m\rho l^{r_i}s-1}{6}\right)\equiv2b_5\left(\frac{ml^{r_i}s-1}{6}\right)\equiv2(r_i+1)e\equiv i\ (\mathrm{mod}\ m).$$

Since the variables except $\rho$ are fixed, it suffices to prove that the estimate of the choices of $\rho\gg X/\log X$ and which is derived from Theorem \ref{Serre's theorem v2} and the Prime Number Theorem.

The upper bound $10(m-1)$ of $k$ is obtained by Sturm's Theorem.

\end{proof}

\section{Examples of Ramanujan-type congruences}

\label{examples}

Using Sturm's Theorem, we compute that
$$\modulo{\sum_{n=0}^\infty b_5\bracket{\frac{mn-1}{6}}q^{n}\ |\ T(l)}{0}{m}$$

\noindent
satisfied for $(m,l)=(7,17)$, $(11,41)$, $(13,16519)$. Some elementary computation yields that

\begin{examples}
\begin{equation}
    \notag
    \modulo{b_5(2023n+99)}{0}{7},
\end{equation}
\begin{equation}
    \notag
    \modulo{b_5(18491n+75)}{0}{11},
\end{equation}
\begin{equation}
    \notag
    \modulo{b_5(3547405693n+35791)}{0}{13}.
\end{equation}
\end{examples}

\noindent
Moreover, the congruence $\modulo{b_5(5n+4)}{0}{5}$ implies that
$$\modulo{\sum_{n=0}^\infty b_5\bracket{\frac{5n-1}{6}}q^{n}\ |\ T(l)}{0}{5}$$

\noindent
satisfied for each prime $l$.

\section{Open problems}

We have the following conjecture of the existence of Ramanujan-type congruence.
\begin{conjecture}
Let $m$ be a positive integers, then there are infinitely many Ramanujan-type congruences modulo $m$.
\end{conjecture}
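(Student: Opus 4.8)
The plan is to reduce the conjecture, via the Chinese Remainder Theorem, to the case in which $m=\ell^{j}$ is a prime power, and then to treat the prime powers one auxiliary prime at a time. Write $m=\prod_{i}\ell_{i}^{j_{i}}$, and suppose that for each $i$ one can produce an infinite family of arithmetic progressions $An+B$ on which $b_5(An+B)\equiv0\ (\mathrm{mod}\ \ell_{i}^{j_{i}})$. One checks that the progressions delivered by the method of Theorem \ref{main theorem} have common difference whose only prime divisors are $\ell_{i}$ and the chosen auxiliary prime, so that picking, for distinct $i$, auxiliary primes that are pairwise distinct and coprime to every $\ell_{i'}$ makes the resulting common differences pairwise coprime. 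Intersecting one progression from each family by the Chinese Remainder Theorem then yields a progression on which $b_5\equiv0\ (\mathrm{mod}\ m)$, and since each family is infinite this produces infinitely many Ramanujan-type congruences modulo $m$. So all of the content lies in the prime-power case.

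For a prime power $\ell^{j}$ with $\ell\geq7$ I would rerun the proof of Theorem \ref{main theorem} with $m$ replaced by $\ell^{j}$ throughout. Two ingredients must be upgraded. First, the elementary congruence $f(m;z)\equiv\eta^{am+1}(5z)\eta^{bm-1}(z)\ (\mathrm{mod}\ m)$ rests on Fermat's little theorem; modulo $\ell^{j}$ one instead uses the standard refinement $F(q)^{\ell^{t}}\equiv F(q^{\ell})^{\ell^{t-1}}\ (\mathrm{mod}\ \ell^{t})$ with $F(q)=\prod_{n}(1-q^{n})$, which still allows the relevant $\eta$-quotient to be rewritten modulo $\ell^{j}$, at the cost of a more involved level. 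Second, and more importantly, Serre's Theorem \ref{Serre's theorem} must be used in its prime-power form: for a fixed cusp form with integer coefficients the system of Hecke eigenvalues modulo $\ell^{j}$ has finite image, so by the Chebotarev density theorem a positive density of primes $l$ satisfy $T(l)\equiv0\ (\mathrm{mod}\ \ell^{j})$ on that form. Both facts are available in the literature; the most economical packaging is Treneer's theorem on congruences for coefficients of weakly holomorphic modular forms, which, applied to the weakly holomorphic $\eta$-quotient attached to $b_5$, yields infinitely many Ramanujan-type congruences modulo $\ell^{j}$ for every prime $\ell$ not dividing the level, that is, for every $\ell\geq7$.

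The cases that remain are $m=2^{j}$, $m=3^{j}$ and $m=5^{j}$, and this is where I expect the real difficulty to be. For $m=2$ and $m=5$ Ramanujan-type congruences already exist by \cite{hirschhorn2010elementary} and \cite{gordon1997divisibility} (infinitely many in the first case), so here the task is to lift these up the $\ell$-adic tower to $2^{j}$ and $5^{j}$; this should be feasible by an explicit analysis of the action of $U(\ell)$ on the finite-dimensional space of modular forms modulo $\ell^{j}$ of the relevant level, in the style of the classical treatment of $p(n)$ modulo powers of $5$ and $7$, but it is technical. The genuinely hard case is $m=3$: not even a single Ramanujan-type congruence for $b_5$ is known, so there is no base case to lift, and — as already noted after Theorem \ref{main theorem} — the construction breaks down precisely because $3\mid6$, so the substitution $n\mapsto(mn-1)/6$ underlying the whole argument has no analogue. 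I expect that settling $m=3$, presumably through a new modular construction that avoids the division by $6$ and produces a first congruence that can then be lifted to $3^{j}$, is the main obstacle; once it is resolved, the prime-power machinery above together with the Chinese Remainder Theorem should close out the remaining cases.
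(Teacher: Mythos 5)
This statement is one of the paper's open conjectures: the paper offers no proof of it (and indeed remarks that not even a single congruence modulo $3$ is known), so there is no argument of the author's to compare yours against. Judged on its own terms, your proposal is a reduction plan rather than a proof, and you concede as much. The sound parts are: the Chinese Remainder Theorem gluing of congruences with pairwise coprime moduli (the paper makes the same observation after Theorem \ref{main theorem}, for squarefree $m$ coprime to $3$), and the treatment of prime powers $\ell^j$ with $\ell\geq7$ via Treneer's theorem on weakly holomorphic forms, which does go beyond what the paper proves. But the conjecture is about \emph{all} positive integers $m$, and the cases $\ell=2,3,5$ are exactly where every known method fails: Treneer's theorem requires $\ell$ coprime to the level, and the relevant weakly holomorphic form attached to $b_5$ (essentially $\eta(120z)/\eta(24z)$) has level divisible by $2$, $3$ and $5$, so none of these primes is covered by that route.

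Concretely, three gaps remain after your reduction. For $m=2^j$ and $m=5^j$ you propose to ``lift'' the known congruences of Hirschhorn--Sellers and Gordon--Ono up the $\ell$-adic tower by analyzing $U(\ell)$ on spaces of forms modulo $\ell^j$; no such analysis is carried out, and the classical $p(n)$ arguments you invoke depend on special structural facts (e.g.\ $U(5)$ contracting a low-dimensional space of eta-quotients) that must be verified anew for $b_5$ --- this is the entire content of those cases, not a technicality. For $m=3^j$ there is not even a base case: as you note yourself, the substitution $n\mapsto(mn-1)/6$ collapses when $3\mid6$, and ``presumably a new modular construction'' is precisely the open problem, not a step in a proof. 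Finally, even the CRT step silently assumes each prime-power family consists of genuine congruences on full arithmetic progressions with controllable moduli; that is fine for the progressions extracted from Theorem \ref{main theorem} (modulus $m l^2$ after fixing $n$ in a unit class mod $l$), but it must be re-checked for whatever construction eventually handles $2^j$, $3^j$, $5^j$. So the proposal, while a reasonable program, leaves the conjecture open exactly where the paper does.
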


We also have the following conjecture analogous to Newman's Conjecture for the usual partition function $p(n)$.
\begin{conjecture}
\label{Newman's conjecture for b_5(n)}
Let $m$ be a positive integers, then for each integer $i$, there are infinitely many $n$ for which
$$\modulo{b_5(n)}{i}{m}.$$
\end{conjecture}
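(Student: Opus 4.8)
A full proof of Conjecture \ref{Newman's conjecture for b_5(n)} seems beyond current methods, but Theorems \ref{main theorem} and \ref{other residue classes} already suggest a plan. It suffices to treat $i\in\{0,1,\dots,m-1\}$, and we organize the argument by $i$ and by $m$. For a prime $m\geq5$, Theorem \ref{main theorem} gives $\#\{0\leq n\leq X:\modulo{b_5(n)}{0}{m}\}\gg X$, so the case $i=0$ is settled. For a prime $m\geq7$, Theorem \ref{other residue classes} reduces the cases $i=1,\dots,m-1$ to proving that
$$F_m(z):=\sum_{n=0}^\infty b_5\bracket{\frac{mn-1}{6}}q^n\in S_{2m-2}(\Gamma_0(180),\chi_5)_m$$
is not identically zero modulo $m$: its nonzero coefficients occur precisely at the indices $n\equiv m\pmod6$, i.e.\ at the indices $6k+m$, so ``$F_m\not\equiv0$'' is exactly the hypothesis of Theorem \ref{other residue classes}. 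For any \emph{fixed} prime $m$ this hypothesis is decided by the finite computation delimited by the bound $k<10(m-1)$, and it is easily verified for the small primes (for instance $b_5(1)=1$ is the coefficient of $q$ in $F_7$, so $F_7\not\equiv0\pmod7$); a proof \emph{uniform} in $m$, however, would have to come from the theory of modular forms modulo $m$ --- for instance by controlling the filtration of $\eta^{am+1}(5z)\eta^{bm-1}(z)\ |\ T(m)$, in the notation of the proof of Theorem \ref{main theorem}, since the naive remark that the coefficient of $q$ in $F_m$ equals the small integer $b_5\bracket{\tfrac{m-1}{6}}$ disposes of only finitely many primes.

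For $m=5$ the criterion of Theorem \ref{other residue classes} fails because of the Gordon--Ono congruence; here, however, $\sum_n b_5(n)q^n\equiv\prod_{n\geq1}(1-q^n)^4\pmod5$, and, writing $\prod_{n\geq1}(1-q^n)^4=\sum_n c(n)q^n$, one has $\sum_n c(n)q^{6n+1}=\eta(6z)^4$, the CM newform in $S_2(\Gamma_0(36))$ with complex multiplication by $\mathbb{Q}(\sqrt{-3})$. Since $5$ is inert in $\mathbb{Q}(\sqrt{-3})$, the image of the associated mod-$5$ Galois representation lies in the normalizer of a non-split Cartan; Chebotarev, together with the multiplicativity of the Fourier coefficients of $\eta(6z)^4$, then shows that every residue class modulo $5$ is attained by $b_5(n)$ for infinitely many $n$. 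The case $m=2$ is easier still: $i=0$ follows from Hirschhorn--Sellers, and $b_5(n)$ is odd for infinitely many $n$ because $\sum_n b_5(n)q^n=\prod_{5\nmid n}(1-q^n)^{-1}$ is not a polynomial modulo $2$.

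The genuine gap is the prime $m=3$: no Ramanujan-type congruence for $b_5$ modulo $3$ is known, so even the case $i=0$ is open. The plan there would be to realize $\sum_n b_5(n)q^n$ modulo $3$ inside a space of cusp forms of small level --- again through $\eta$-quotients and the $U$ operator --- and then either to locate a congruence by a Radu/Smoot-type algorithm or to study the residual mod-$3$ Galois representation directly. I expect this case, together with the powers of $3$, to be the principal obstacle.

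Finally, for composite $m=\prod_j p_j^{a_j}$ with $i\not\equiv0$, the Chinese Remainder Theorem does \emph{not} reduce the problem to prime powers the way it does for $i=0$: ``infinitely many $n$ with $\modulo{b_5(n)}{i_j}{p_j^{a_j}}$'' for each $j$ does not produce a single $n$ satisfying all the congruences at once. To combine the prime-power statements one needs them with uniformity --- each residue class occurring along explicit arithmetic progressions, or at least with positive density --- after which a sieve/inclusion--exclusion argument applies; it would thus suffice to upgrade Theorem \ref{other residue classes} from its current $X/\log X$ lower bound to a positive-density statement (which amounts to replacing the input of Theorem \ref{Serre's theorem v2} by a stronger equidistribution statement derived from the image of the residual Galois representation, so as to rule out lacunarity of $F_m$ modulo $p$). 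The step from $m=p$ to $m=p^j$ with $p\geq5$ should then follow by iterating the Hecke-operator arguments over the filtration of modular forms modulo $p^j$. In summary, a complete proof awaits the prime $3$ together with its powers and a positive-density strengthening of Theorem \ref{other residue classes} for the nonzero residues, and I believe the former to be the harder of the two.
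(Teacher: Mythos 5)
This statement is one of the paper's open conjectures: the paper gives no proof of it, only the remark that Theorem \ref{other residue classes} verifies it for the primes $7\leq m\leq 40$. Your proposal likewise does not (and explicitly does not claim to) prove it, so there is no complete argument here to certify; what you have written is a research plan. The parts of it that overlap with the paper are correct and are exactly the paper's route: for prime $m\geq7$ and $i\not\equiv0$, the possibly nonzero coefficients of $F_m(z)=\sum_n b_5\bracket{\frac{mn-1}{6}}q^n$ sit at $n\equiv m\ (\mathrm{mod}\ 6)$, so the hypothesis of Theorem \ref{other residue classes} is precisely $F_m\not\equiv0\ (\mathrm{mod}\ m)$, decidable by a Sturm-bound computation (whence $k<10(m-1)$); your check that the coefficient of $q$ in $F_7$ is $b_5(1)=1$ is fine and matches the paper's verification range. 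The uniformity in $m$ that you flag as missing is a genuine gap, and the paper does not supply it either --- this is exactly why the statement is a conjecture rather than a theorem.

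The remaining pieces of your plan are sketches, not proofs, and you should not present them as settled. For $m=5$ the identification $\sum_n b_5(n)q^n\equiv\prod_{n\geq1}(1-q^n)^4\ (\mathrm{mod}\ 5)$ and the relation to $\eta(6z)^4$ are correct, but the step from ``image in the normalizer of a non-split Cartan'' to ``every residue class mod $5$ is attained by coefficients supported on exponents $\equiv1\ (\mathrm{mod}\ 6)$'' still has to be carried out (equidistribution of the relevant Hecke character values mod $5$, together with multiplicativity and the vanishing at inert primes). For $m=2$, the assertion that the series is ``not a polynomial modulo $2$'' is exactly the statement to be proved, not an argument --- though it does follow from Hirschhorn and Sellers' exact parity characterization, which you could cite instead. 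Your observations that $m=3$ (and its powers) is untouched, and that the Chinese Remainder Theorem alone does not splice together nonzero residue classes for composite $m$ without density or arithmetic-progression information, are accurate diagnoses of why the conjecture remains open; nothing in your proposal, or in the paper, closes it beyond the finite list of primes handled by Theorem \ref{other residue classes}.
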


\begin{remark}
By Theorem \ref{other residue classes}, we verify that Conjecture \ref{Newman's conjecture for b_5(n)} is true for prime $7\leq m\leq 40$.
\end{remark}



\end{document}